\let\color@begingroup\relax
   \let\color@endgroup\relax}{}%
\def\fix@ieeecolor@hbox#1{%
  \hbox{\color@begingroup#1\color@endgroup}}
\patchcmd\@makecaption{\hbox}{\fix@ieeecolor@hbox}{}{\FAILED}
\patchcmd\@makecaption{\hbox}{\fix@ieeecolor@hbox}{}{\FAILED}
\def\BibTeX{{\rm B\kern-.05em{\sc i\kern-.025em b}\kern-.08em
    T\kern-.1667em\lower.7ex\hbox{E}\kern-.125emX}}
\newcommand{\GFp}{\mathcal{G}\mathcal{F}_p}
\newcommand{\GFpi}{\mathcal{G}\mathcal{F}_{\pi}}
\newcommand{\bs}[1]{\boldsymbol{#1}}
\newcommand{\mR}{\mathbb{R}}
\theoremstyle{remark}
\newtheorem{remark}{Remark}
\newtheorem{definition}{Definition}
\newtheorem{proposition}{Proposition}
\newtheorem{lemma}{Lemma}
\newtheorem{theorem}{Theorem}
\newtheorem{corollary}{Corollary}
\newcommand{\review}[1]{\textcolor{black}{#1}}
\begin{document}

\title{On the Existence of Steady-State Solutions to the Equations Governing Fluid Flow in Networks}

\author{Shriram Srinivasan$^{\dagger}$, Nishant Panda$^{\ddagger}$, and Kaarthik Sundar$^{*}$
\thanks{$^{\dagger}$Applied Mathematics and Plasma Physics Group, Los Alamos National
Laboratory, Los Alamos, New Mexico, USA. E-mail: \texttt{shrirams@lanl.gov}}\;
\thanks{$^{\ddagger}$Information Sciences Group, Los Alamos National Laboratory, Los Alamos, New Mexico, USA. E-mail: \texttt{nishpan@lanl.gov}}
\thanks{$^{*}$Information Systems and Modeling Group, Los Alamos National
Laboratory, Los Alamos, New Mexico, USA. E-mail: \texttt{kaarthik@lanl.gov}}\;
\thanks{SS and KS acknowledge funding provided by LANL’s Directed Research and Development (LDRD) project 20220006ER. SS and NP  thank LDRD for support through 20230480ECR and 20230254ER respectively. The research work conducted at Los Alamos National Laboratory is done under the auspices of the National Nuclear Security Administration of the U.S. Department of Energy under Contract No. 89233218CNA000001. }\; 
}

\maketitle

\begin{abstract}
The steady-state solution of fluid flow in pipeline infrastructure networks driven by junction/node potentials is a crucial ingredient in various decision-support tools for system design and operation. While the \review{nonlinear} system is known to have a unique solution (when one exists), the absence of a definite result on the existence of solutions hobbles the development of computational algorithms, for it is not possible to distinguish between algorithm failure and non-existence of a solution.
In this letter, we show that for any fluid whose equation of state is a scaled monomial, a unique solution exists for such \review{nonlinear} systems if the term \emph{solution} is interpreted in terms of \emph{potentials} and flows rather than \emph{pressures} and flows. However, for gases following the CNGA equation of state, while the question of existence remains open, we construct an alternative system that always has a unique solution and show that the solution to this system is a good approximant of the true solution. The existence result for flow of natural gas in networks also applies to other fluid flow networks such as water distribution networks or networks that transport carbon dioxide in carbon capture and sequestration. 
Most importantly, our result enables correct diagnosis of algorithmic failure, problem stiffness, and non-convergence in computational algorithms.
\end{abstract}

\begin{IEEEkeywords}
steady-state, network flow equations, homotopy, degree theory, non-ideal gas, existence, uniqueness
\end{IEEEkeywords}
\IEEEpeerreviewmaketitle

\section{Introduction} \label{sec:intro}

\IEEEPARstart{F}LUID flow in pipeline infrastructure networks driven by junction/node potentials through a network consisting of pipelines and pressure-regulating devices (like pumps/compressors) is relevant in the context of energy distribution and environmental remediation,  such as in the delivery of hydrogen/natural gas, water distribution, or transport of carbon dioxide in carbon capture and sequestration.  Under steady-state conditions,  the governing equations (which are coupled PDEs \cite{nrsolver}) reduce to a \review{nonlinear} system of algebraic equations that relate pressures at junctions/nodes with mass flows of the pipes and other devices throughout the network.  The solution of such a \review{nonlinear} system is a crucial ingredient in various decision support tools for system design and operation to ensure the safe and reliable transport of fluids through the network.

\review{Nonlinear} algebraic systems may have no solution, a unique solution, or even multiple solutions, and before designing algorithms to solve such systems, it is important to answer the question of existence and uniqueness of solutions if possible. The question of existence is not an abstruse mathematical curiosity but one that has fundamental significance from the viewpoint of modeling, control, and computation. The equations representing the model of a real system are a mathematical construct that aims to mimic the physical response of real systems. While the occurrence of a physical phenomenon may be obvious and undeniable, the verification that the equations can simulate this phenomenon with fidelity is tantamount to proving the existence of a solution under certain hypotheses or assumptions. Thus, investigation of existence of a solution is linked to the validity of the model itself.  
In the control of linear systems, the existence of a solution is rarely spoken about since most linear ODE systems under consideration are almost always regular and guarantee existence and uniqueness.  However, in the case of nonlinear systems, notions such as controllability cannot be well-defined unless it is known that a solution exists \cite{slotinebook}. Finally, from the perspective of computation, the existence of a solution is an assurance that if successively finer numerical approximations do not converge, it is the algorithm that needs modification and not the problem.

Notwithstanding the connection to these fundamental issues,  rarely do systems arising in practice afford us either the luxury of first proving/determining existence of a solution before turning to algorithms for approximating it, or the courtesy of readily satisfying the hypotheses required for the invocation of some mathematical theorem that can assert the existence of a solution.  Thus, research efforts march onwards to develop computational algorithms \emph{assuming} that a solution exists,  with uniqueness of solution lending support when applicable.  Such is the case even with a well-established, justly celebrated model like the Navier-Stokes equation for incompressible fluids.
The subject of this article is the steady-state equations governing adiabatic, unidirectional\footnote{\review{the only non-zero component of flow/velocity is along the length of the edge}} fluid flow in networks. The canonical form of these equations corresponds to different contexts, such as natural gas delivery, water distribution, or transport of carbon dioxide in carbon capture and sequestration. \review{However, the system of equations corresponding to natural gas subsume the other cases and thus the analysis and conclusions obtained herein for natural gas apply without caveat to the other fluid flow systems.} 

For (natural) gas flow, the \review{nonlinear} \review{system of equations} has been shown to have a unique solution, if one exists \cite{nrsolver,Singh2019,Singh2020,misra2020monotonicity}, and several attempts have been made to develop efficient computational algorithms \cite{Dvijotham2015Jun,ojha2017solving,Singh2020, nrsolver} agnostic to the grid topology that can determine a solution.  However, none of these algorithms succeed with large grid sizes, and the absence of a definite result on existence of solutions hobbles further development; for it is not possible to distinguish between algorithm failure and non-existence of a solution. \review{Moreover, non-existence of solutions (pressures and flows) has been shown \cite{nrsolver} even for the simplest case of flow in a single pipe.}

In this letter, we  show that \review{for any fluid whose equation of state is a scaled monomial, such as the ideal gas}, a unique solution exists for these \review{nonlinear}  gas flow systems if the term \emph{solution} is interpreted in terms of \emph{potentials} and flows rather than \emph{pressures} and flows \review{as in \cite{nrsolver}} and this result applies to other fluid flow networks as well.

We use \emph{topological degree theory} and \emph{homotopy invariance} \cite{orteganonlinbook,benevieri2023intro} to assert existence of a solution by starting with a linear system (where the degree computation is trivial) that can be continuously transformed to obtain the \review{nonlinear} system of interest. 
There are subtle but crucial differences from \emph{homotopic continuation} methods \cite{Watson1989May,allgower2003introduction} which are constructive and postulate a trajectory connecting the  solution of a known problem to the solution of the \review{nonlinear} system of interest. Homotopic continuation is mainly used as a numerical algorithm \cite{Lima-Silva2023Jul} and requires that the system satisfy very stringent hypotheses that are verifiable only in rare cases \cite{Chiang2017Feb} where the system has some special structure. In contrast, assertions of existence following degree theory and homotopy invariance as used here require weaker assumptions that even allow  the Jacobian to be singular at a solution.

The article is structured as follows: 
\review{We state and summarize the mathematical propositions and assertions required in Section~\ref{sec:background}.
After setting up the notation and recapitulating the governing equations in Section~\ref{sec:prelims},  we prove the main result in Section~\ref{sec:results} and conclude with Section~\ref{sec:conclusion}.}
\section{\review{Mathematical Background}}
\label{sec:background}
The following results from topological degree theory and homotopy invariance will be invoked subsequently. Proofs and other details of degree theory may be found in \cite[Chapter~6]{orteganonlinbook} or \cite{benevieri2023intro}.
\begin{definition}
 \label{def:regular}
A point $\bs y$ is a \emph{regular} value of a map $\bs{f}: \mathbb{R}^{n} \to \mathbb{R}^{n}$ if $\det(  \bs f'(\bs x) ) \neq 0 \; \forall \; \bs{x} \in \bs{f}^{-1} \{\bs y\} $.
\end{definition}
\begin{definition}
\label{def:degree}
Given a map $\bs{f}: \mathbb{R}^{n} \to \mathbb{R}^{n}$ and an open bounded domain  $\mathcal{D} \subset \mathbb{R}^{n}$ such that (1) $\bs{y}$ is a \emph{regular} value of $\bs{f}$, (2) $\bs{f}$ is continuously differentiable on $\mathcal{D}$, and (3) $\bs{y} \notin \bs{f}(\partial\mathcal{D})$, 
the \emph{degree} of $\bs{f}$ at $\bs{y}$ over $\mathcal{D}$ is defined as 
$$\mathrm{deg}(\bs{f}, \mathcal{D}, \bs{y}) = \begin{cases}  0 & \mathrm{if} \; \bs{f}^{-1} \{\boldsymbol{y}\} \; \textrm{is empty}. \\ 
 \sum\limits_{n}\mathrm{sgn}( \det( \bs f'(\bs x_n) ) ) & \forall \; \bs x_n \in \bs{f}^{-1} \{\bs{y}\}. 
\end{cases}$$
\end{definition}
\begin{remark}
\label{remark:degree-lin-op}
Given a domain $\mathcal{D} \subset \mR^{n}$ and an invertible linear operator $\bs A$, $\mathrm{deg}(\bs A, \mathcal{D}, \bs{y}) \neq 0$ for any   $\bs y \in \mathcal{D}$.
\end{remark}
\begin{proposition}
\label{thm:kronecker}
If $\mathrm{deg}(\bs{f}, \mathcal{D}, \bs{0}) \neq 0$, then  $\bs{f}$ has a zero/root in $\mathcal{D}$.
\end{proposition}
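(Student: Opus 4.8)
The plan is to prove the contrapositive: assuming $\bs f$ has no root in $\mathcal D$, I would show $\mathrm{deg}(\bs f, \mathcal D, \bs 0) = 0$. I take as given everything needed for the degree to be well defined in the sense of Definition~\ref{def:degree} and its extension to non-regular values (noted in the text): $\bs f$ continuous on $\overline{\mathcal D}$ and continuously differentiable on $\mathcal D$, and $\bs 0 \notin \bs f(\partial\mathcal D)$.

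The regular case is immediate. Suppose $\bs 0$ is a regular value of $\bs f$ in the sense of Definition~\ref{def:regular}. If $\bs f$ has no root in $\mathcal D$, then $\bs f^{-1}\{\bs 0\}\cap\mathcal D = \emptyset$, so the sum defining $\mathrm{deg}(\bs f,\mathcal D,\bs 0)$ in Definition~\ref{def:degree} ranges over the empty index set and equals $0$. Contrapositively, $\mathrm{deg}(\bs f,\mathcal D,\bs 0)\neq 0$ forces $\bs f^{-1}\{\bs 0\}\cap\mathcal D\neq\emptyset$, which already settles the claim whenever $\bs 0$ is regular.

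For a general $\bs 0$, I would pass through nearby regular values. By Sard's theorem the regular values of $\bs f$ are dense, and the extended degree is constructed so that $\bs y \mapsto \mathrm{deg}(\bs f,\mathcal D,\bs y)$ is locally constant on $\mR^{n}\setminus \bs f(\partial\mathcal D)$, a fact I would cite from \cite[Chapter~6]{orteganonlinbook}. Hence, if $\mathrm{deg}(\bs f,\mathcal D,\bs 0)\neq 0$, there is a sequence of regular values $\bs y_k \to \bs 0$ with $\mathrm{deg}(\bs f,\mathcal D,\bs y_k)\neq 0$, and by the regular case each admits $\bs x_k\in\mathcal D$ with $\bs f(\bs x_k)=\bs y_k$. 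Since $\mathcal D$ is bounded, a subsequence of $(\bs x_k)$ converges to some $\bs x^{\star}\in\overline{\mathcal D}$, and continuity gives $\bs f(\bs x^{\star}) = \lim_k \bs y_k = \bs 0$; because $\bs 0\notin \bs f(\partial\mathcal D)$, the limit cannot lie on $\partial\mathcal D$, so $\bs x^{\star}\in\mathcal D$ is the required root.

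The step I expect to be the real content — the one that must be imported rather than checked by hand — is that the extended degree is locally constant on $\mR^{n}\setminus\bs f(\partial\mathcal D)$ (equivalently, that it is invariant under admissible homotopies and independent of the regular value chosen to define it at a non-regular point). This is exactly the portion of degree theory developed in \cite[Chapter~6]{orteganonlinbook}; granting it, the remaining steps are elementary, and if one needs the statement only for a regular value $\bs 0$, the second paragraph alone suffices.
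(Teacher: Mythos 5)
Your proposal is correct, but note that the paper itself offers no proof of Proposition~\ref{thm:kronecker}: it is one of the results explicitly ``stated (without proof)'' and delegated to \cite[Chapter~6]{orteganonlinbook} and \cite{benevieri2023intro}, so there is no in-paper argument to compare against. What you have written is essentially a faithful reconstruction of the standard textbook proof being cited. The regular case is indeed immediate --- with the paper's Definition~\ref{def:degree} it is nearly definitional, since the definition already stipulates that an empty preimage gives degree zero, so no root in $\mathcal{D}$ forces $\mathrm{deg}(\bs{f},\mathcal{D},\bs{0})=0$. For a non-regular $\bs{0}$ your route (Sard's theorem for density of regular values, local constancy of $\bs{y}\mapsto\mathrm{deg}(\bs{f},\mathcal{D},\bs{y})$ on the open set $\mR^{n}\setminus\bs{f}(\partial\mathcal{D})$, then a compactness-and-continuity limit pushed off the boundary by $\bs{0}\notin\bs{f}(\partial\mathcal{D})$) is the standard argument and is sound; your limiting step correctly uses boundedness of $\mathcal{D}$ and continuity of $\bs{f}$ on $\mathrm{cl}(\mathcal{D})$. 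You are also candid that the one nontrivial ingredient --- well-definedness and local constancy of the extended degree --- is imported rather than proved, which is exactly the content the paper likewise outsources to its references; a fully self-contained proof would require developing that machinery, but for the purpose this proposition serves in the paper your treatment is at least as complete as the original.
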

\begin{proposition}[Homotopy invariance]
\label{thm:homotopy}
Let $\mathcal{D} \subset \mathbb{R}^{n}$  be open and bounded and let $H: \mathrm{cl}({\mathcal{D}}) \times [0,1] \to \mathbb{R}^{n}$ be a continuous map on $\mathrm{cl}({\mathcal{D}}) \times [0,1]$ where $\mathrm{cl}({\mathcal{D}})$ is the \emph{closure} of the set $\mathcal{D}$. Suppose further that $H(\boldsymbol{x}, s) \neq \boldsymbol{0}$ in $\partial\mathcal{D} \times [0, 1]$. Then $\mathrm{deg}(H(\boldsymbol{x}, s), \mathcal{D}, \boldsymbol{0})$ is constant for all $s \in [0, 1]$. Thus if $\mathrm{deg}(H(\boldsymbol{x}, 0), \mathcal{D}, \boldsymbol{0}) \neq 0$, then  $H(\boldsymbol{x}, s) = \boldsymbol{0}$ has a solution in $\mathcal{D}$ for every $s \in [0, 1]$.
\end{proposition}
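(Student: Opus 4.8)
The plan is to peel the statement down to the case where everything is smooth and $\bs 0$ is a regular value, and then to run the classical ``boundary of a compact $1$-manifold'' argument. First, since $H$ is continuous on the compact set $\mathrm{cl}(\mathcal D)\times[0,1]$, the hypothesis $H(\bs x,s)\neq\bs 0$ on $\partial\mathcal D\times[0,1]$ strengthens to a uniform bound $\|H(\bs x,s)\|\geqslant\delta>0$ there. The construction of the degree for continuous maps provides its invariance under small perturbations, so I may replace $H$ by a smooth $\widetilde H$ with $\sup\|\widetilde H-H\|<\delta/2$ without changing $\mathrm{deg}(H(\cdot,s),\mathcal D,\bs 0)$ for any fixed $s$; indeed the straight-line homotopy from $H(\cdot,s)$ to $\widetilde H(\cdot,s)$ never takes the value $\bs 0$ on $\partial\mathcal D$. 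By Sard's theorem (in the manifold-with-boundary form) applied to $\widetilde H$ on $\mathcal D\times[0,1]$ I may then choose $\bs y$ with $\|\bs y\|<\delta/2$ that is simultaneously a regular value of $\widetilde H$, of $\widetilde H(\cdot,0)$ and of $\widetilde H(\cdot,1)$; since $\bs y$ and $\bs 0$ lie in the same connected component of $\mR^n\setminus\widetilde H(\partial\mathcal D\times[0,1])$, working with $\bs y$ changes no degree. Thus it suffices to show $\mathrm{deg}(F(\cdot,0),\mathcal D,\bs y)=\mathrm{deg}(F(\cdot,1),\mathcal D,\bs y)$ for $F:=\widetilde H$ smooth, with $\bs y$ a common regular value and $\bs y\notin F(\partial\mathcal D\times[0,1])$.

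For the heart of the argument: since $\bs y\notin F(\partial\mathcal D\times[0,1])$, the set $\Sigma:=\{(\bs x,s)\in\mathcal D\times[0,1]:F(\bs x,s)=\bs y\}$ is a compact subset of the $(n+1)$-manifold-with-boundary $\mathcal D\times[0,1]$, and regularity of $\bs y$ makes $\Sigma$ a compact $1$-manifold whose boundary lies in $\mathcal D\times\{0,1\}$ and equals the finite set $\bigl(F(\cdot,0)^{-1}(\bs y)\times\{0\}\bigr)\cup\bigl(F(\cdot,1)^{-1}(\bs y)\times\{1\}\bigr)$. A compact $1$-manifold with boundary is a finite disjoint union of circles and closed arcs, and each arc has exactly two endpoints. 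The standard orientation of $\Sigma$ (orient $\mathcal D\times[0,1]$ and pull back an orientation of $\mR^n$ through $DF$) assigns to each boundary point a sign that, up to one global sign on the $s=0$ cap, agrees with $\mathrm{sgn}\det D(F(\cdot,s))$ at that point, and the two endpoints of any arc receive opposite induced signs; hence the signed count over $\partial\Sigma$ is zero, which is exactly the assertion that the signed count of $F(\cdot,0)^{-1}(\bs y)$ equals that of $F(\cdot,1)^{-1}(\bs y)$, i.e.\ $\mathrm{deg}(F(\cdot,0),\mathcal D,\bs y)=\mathrm{deg}(F(\cdot,1),\mathcal D,\bs y)$. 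Since $s\mapsto\mathrm{deg}(H(\cdot,s),\mathcal D,\bs 0)$ is integer-valued and, by the perturbation estimate above, locally constant on $[0,1]$, it is constant; the closing assertion of the proposition then follows from Proposition~\ref{thm:kronecker}.

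I expect the main obstacle to be not this $1$-manifold count but making the reductions of the first paragraph legitimate: that is, knowing beforehand that the degree is a well-defined integer that is (i) unchanged by perturbations of the map that keep $\bs 0$ off $\partial\mathcal D$, and (ii) independent of the regular value chosen within a connected component of $\mR^n\setminus F(\partial\mathcal D)$. These are structural facts established in the construction of the degree in \cite[Chapter~6]{orteganonlinbook} and \cite{benevieri2023intro}, which I would cite rather than reprove, together with the (delicate) orientation conventions underlying the ``opposite signs at the two ends of an arc'' claim. A self-contained alternative avoids differential topology altogether: for smooth $F$ with $\bs 0\notin F(\partial\mathcal D)$ and a mollifier $\varphi_\varepsilon\geqslant 0$ supported in a small ball about $\bs 0$ with $\int\varphi_\varepsilon=1$, one has $\mathrm{deg}(F,\mathcal D,\bs 0)=\int_{\mathcal D}\varphi_\varepsilon(F(\bs x))\det DF(\bs x)\,d\bs x$ for all small $\varepsilon$; writing the analogous integral $I(s)$ for $F(\cdot,s)$ and differentiating in $s$ exhibits $I'(s)$ as the integral over $\mathcal D$ of the divergence of a vector field with compact support in $\mathcal D$, so $I'(s)=0$ by the divergence theorem, and the general continuous case follows once more by approximation.
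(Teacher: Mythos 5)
Your proposal is correct in substance, but note that the paper does not prove this proposition at all: it is stated explicitly as background, ``without proof,'' with the reader referred to \cite[Chapter~6]{orteganonlinbook} and \cite{benevieri2023intro}. So there is no paper proof to match; what you have reconstructed is the standard textbook argument those references contain. Your route --- uniform lower bound on $\|H\|$ over the compact set $\partial\mathcal{D}\times[0,1]$, smooth approximation, Sard to pick a common regular value $\bs{y}$ near $\bs{0}$, and the compact $1$-manifold-with-boundary (cobordism) count, followed by ``integer-valued and locally constant, hence constant'' and Proposition~\ref{thm:kronecker} --- is the classical differential-topology proof, and your mollified-integral alternative is the classical analytic (Heinz-type) proof. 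Two caveats are worth making explicit. First, your reduction step invokes stability of the degree under small perturbations and independence of the regular value within a component of $\mR^{n}\setminus \widetilde H(\partial\mathcal{D}\times[0,1])$; in the usual construction these facts are established \emph{after} (or alongside) smooth homotopy invariance for regular values, so the logical order should be stated carefully to avoid apparent circularity --- you acknowledge this by citing rather than reproving, which is acceptable. Second, the sign bookkeeping on $\partial\Sigma$ (opposite induced signs at the two ends of an arc, with the orientation flip between the $s=0$ and $s=1$ caps) is exactly the delicate point; your hedge ``up to one global sign'' is fine for a sketch but would need to be pinned down in a complete write-up.
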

Note that the definition of degree can be extended to allow \emph{non-regular} values in Propositions~\ref{thm:kronecker}~and~\ref{thm:homotopy}.
\section{\review{Model Description}} \label{sec:prelims}
%
\review{The adiabatic, unidirectional flow of compressible gas in a single pipeline is described by the simplified Euler equations \cite{thorley87}. 
\begin{flalign}
  \frac{\partial\rho}{\partial t}+\frac{\partial \phi}{\partial x} = 0, \;
  \frac{\partial\phi}{\partial t} + \frac{\partial p}{\partial x}  = 
  -\lambda\frac{\phi |\phi|}{\rho(p)}, 		
  \label{eq:euler1}
\end{flalign}
where $\rho, p, \phi$ are the density, pressure, and mass flux, respectively.
The additional parameter $\lambda$ incorporates the friction factor and diameter of the pipe.  
For steady-state, the system simplifies to
\begin{flalign}
\phi'(x)= 0, \; \rho(p)p'(x)  = 
  -\lambda \cdot \phi |\phi|.
  \label{eq:ode}
\end{flalign}
The nature of the gas is prescribed by the equation of state $\rho(p)$ chosen to model the pressure-density relationship of the gas and it determines a potential function $\pi: \mR \to \mR$ through
\begin{equation}
\pi(p) = \int_{0}^{p} \rho(\tilde{p}) ~d\tilde{p},
\label{eq:pi-def}
\end{equation}
which allows \eqref{eq:ode} to be integrated to the algebraic form
$$ \phi_1 = \phi_2 = \phi_{12}, \; \pi(p_1) - \pi(p_2) = \lambda \cdot \phi_{12} |\phi_{12}|\cdot (x_2 - x_1).$$
}
Two important cases of interest are that of the ideal gas  and a non-ideal gas that follows the CNGA equation of state \cite{menon2005gas}  for which we have:
\begin{subequations}
\begin{gather}
   \rho(p) = p, \; \pi^{\mathrm{ideal}}(p) =  p^2/2\;,  \label{eq:pi-ideal}\\
   \rho(p) = b_1p + b_2 p^2, \; \pi^{\mathrm{cnga}}(p) = b_1p^2/2 + b_2p^3/3\;, \label{eq:pi-cnga} \\ 
   \text{with} ~~ b_1 = 1.003 ~\text{and}~ b_2 = 2.968\times10^{-2} \text{~\si{\per\mega\pascal}}. \label{eq:cnga-constants}
\end{gather}
\label{eq:Pi-expressions}
\end{subequations}
\review{In most cases, the equation of state $\rho(p)$ is a polynomial, and so the potential function also has a polynomial form.}

In order to describe the flow equations for a network of pipelines, we first introduce some notation. 
Let $G(V, E)$ denote the graph of the pipeline network, where $V$ and $E$ are the set of \emph{junctions} and \emph{edges} of the network, respectively. The edge set is given by $E = P \bigcup C$ where $P$ and $C$ are the set of pipes and compressors, respectively. A \emph{pipe} $(i, j)$ in $P$ connects the junctions $i$ and $j$ with a resistance coefficient $\beta_{ij} > 0$. Also, a \emph{compressor} $(i, j)$ in $C$ connects junctions $i$ and $j$, which we assume are geographically co-located.  For each junction $i$, we let $p_i, q_i$ denote the nodal value of the pressure and the injection at $i$, respectively.  For any edge  $(i, j)$ (pipe or compressor) that connects junctions $i$ and $j$, we let $\phi_{ij}$ denote the steady flow through the edge. Finally, for each compressor $(i, j)$, we use $\alpha_{ij} \geqslant 1$ to denote the specified \emph{pressure boost ratio or compressor ratio}. It is assumed that the direction of flow is in the direction of pressure boost, i.e., $i \to j$, and hence $\phi_{ij} > 0$ for a \emph{physically reasonable compressor} $(i, j)$. 
Each junction $i$  is assigned to one of two mutually disjoint sets consisting of the slack junctions $N_s$ and non-slack junctions $N_{ns}$ respectively, depending on whether pressure $p_i$ or gas injection $q_i$ at the node is specified. Thus $V = N_s \cup N_{ns}$.

The system of equations for the network  may now be summarized as follows \cite{nrsolver} ($*$ superscript indicates given data):
\begin{subnumcases} {\label{eq:GF-p}
\mathcal G \mathcal F_p:}  
\pi(p_i) - \pi(p_j) = \beta^{*}_{ij} \phi_{ij} | \phi_{ij}| \quad \forall (i, j) \in P, \label{eq:GF-p-pipe}\\ 
\alpha^{*}_{ij} p_i - p_j = 0 \quad \forall (i, j) \in C, \label{eq:GF-p-compressor} \\ 
\underset{(j, i) \in E }{\sum} \phi_{ji} - \underset{(i, j) \in E }{\sum} \phi_{ij} = q^{*}_i \; \forall i \in N_{ns}, \label{eq:GF-p-balance}\\
p_i = p^{*}_i \; \forall i \in N_s. \label{eq:GF-p-slack}
\end{subnumcases}
\review{
\begin{definition}
\label{def:p-sol}
Any $\bs{p} \in \mR^{|V|},\bs{\phi} \in \mR^{|E|}$ that satisfies 
the $\GFp$ system \eqref{eq:GF-p} is called a \emph{generalized pressure-solution}.
\end{definition}
The definition of \emph{generalized pressure-solution} allows the pressures and flows to take any real values. 
However, due to physical considerations we make a distinction between the equations being \emph{solvable} and \emph{feasible}.
As outlined in \cite{nrsolver}, for both the ideal and the non-ideal gas (with CNGA equation of state),  a \emph{feasible} solution will have non-negative pressures and non-negative flow in compressors. Otherwise, the solution/system is \emph{infeasible}.
}

There are certain assumptions we must make about the network in order to ensure the validity of the conclusions and assertions that follow. These assumptions are not restrictive, serving instead to eliminate certain impractical but pathological cases, and are stated as follows \cite{nrsolver}:
\begin{enumerate}[label=(A\arabic*)]
    \item There is at least one slack junction, i.e., $|N_s| \geqslant 1$.\label{assumption:slacks}
    \item When  $|N_s| \geqslant 2$, a path connecting two slack junctions must consist of at least one pipe. \label{assumption:path-pipe}
    \item Any cycle must consist of at least one pipe. \label{assumption:cycle-pipe}
\end{enumerate}
\begin{proposition}
\label{prop:uniqueness-GF-p}
Given assumptions (A1)--(A3), and a monotonic potential function $\pi$, a generalized-pressure solution to the $\GFp$ system \eqref{eq:GF-p}, if it exists, must be unique.
\end{proposition}
\begin{proof}
See \cite{Singh2019, nrsolver}.
\end{proof}
Anticipating subsequent development, we  note the following
\review{
\begin{lemma}
\label{lemma:pi_separability}
If $\pi, g$ are  real-valued, differentiable functions and for all $x, \alpha \in \mR$,  $\pi(\alpha x) = g(\alpha) \pi(x)$, then it is necessary and sufficient that $\pi$ be a \emph{scaled monomial}, i.e., $\pi(x) = k x^r$ for some  $k,  r \in \mR$. 
\end{lemma}
\begin{proof}
The sufficiency of the stated representation is obvious. For necessity, observe that if $x=0$, $\pi(0) = g(\alpha)\pi(0)$. If $\alpha =0$, $\pi(0) = g(0)\pi(x)$. These two conditions imply that either $\pi, g$ are constants or $\pi(0) = g(0) = 0$. Now consider  $\alpha x \neq 0$ for which $\pi(x) \neq 0, g(\alpha)\neq 0$.
Differentiating with respect to $x$ and $\alpha$, one has $\pi'(\alpha x) \alpha = g(\alpha) \pi'(x)$ and $\pi'(\alpha x) x = g'(\alpha) \pi(x)$ respectively. Together, they imply $\alpha g'(\alpha)/g(\alpha) = x \pi'(x)/\pi(x) = r$ (for some $r \in \mR$) from which we may conclude that $\pi(x) = k x^r$ for some $k \in \mR$.
\end{proof}
}
%
%
If we denote a nodal potential $\pi_i = \pi(p_i)\; \forall i \in V$, we can eliminate the pressures in favour of the potentials in \eqref{eq:GF-p-pipe} and \eqref{eq:GF-p-slack}. 
The equation~\eqref{eq:GF-p-compressor} relating pressures $p_i, p_j$  implies that $\pi(\alpha^{*}_{ij} p_i) =\pi_j$. 
\begin{remark}
When the potential function $\pi$ is a scaled monomial as in Lemma~\ref{lemma:pi_separability},  we obtain an equation relating potentials $\pi_i, \pi_j$ for some suitable $\gamma^{*}_{ij} = g(\alpha^{*}_{ij})$ and the system \eqref{eq:GF-p} is transformed into a new system:
\end{remark}
\begin{subnumcases} {\label{eq:GF-pi}
\mathcal G \mathcal F_{\pi}:}  
\pi_i - \pi_j = \beta^{*}_{ij} \phi_{ij} | \phi_{ij}| \quad \forall (i, j) \in P , &  \label{eq:GF-pi-pipe}\\ 
\gamma^{*}_{ij} \pi_i - \pi_j = 0 \quad \forall (i, j) \in C, & \label{eq:GF-pi-compressor} \\ 
\underset{(j, i) \in E }{\sum} \phi_{ji} - \underset{(i, j) \in E }{\sum} \phi_{ij} = q^{*}_i \; \forall i \in N_{ns}, & \label{eq:GF-pi-balance}\\
\pi_i = \pi^{*}_i \; \forall i \in N_s. & \label{eq:GF-pi-slack}
\end{subnumcases}

\begin{remark} 
\label{rem:approx}
The potential function $\pi = \pi^{\mathrm{ideal}}$ satisfies Lemma~\ref{lemma:pi_separability} (with $k=1/2, r=2$) while $\pi = \pi^{\mathrm{cnga}}$ does not.
Thus $\GFp$ system~\eqref{eq:GF-p} transforms to $\GFpi$ system~\eqref{eq:GF-pi} for the ideal gas equation of state but not for the CNGA equation of state. This will be investigated further in Sec.~\ref{sec:results}.
\end{remark}

\review{
\begin{definition}
\label{def:pi-sol}
A $\bs{\pi} \in \mR^{|V|},\bs{\phi} \in \mR^{|E|}$ that satisfies 
the $\GFpi$ system~\eqref{eq:GF-pi} is called a \emph{generalized potential-solution}.
\end{definition}
\begin{lemma}
\label{lemma:Pi-unique}
Given assumptions (A1)--(A3), a generalized-potential solution to $\GFpi$ system~\eqref{eq:GF-pi}, if it exists, must be unique.
\end{lemma}
\begin{proof}
One can interpret the $\GFpi$ system~\eqref{eq:GF-pi} as a special case of $\GFp$ system~\eqref{eq:GF-p} with the equation of state $\rho(p) = 1$, and  $\pi(p) = p$.
By Proposition~\ref{prop:uniqueness-GF-p}, solutions to the $\GFp$ system~\eqref{eq:GF-p}, if they exist, are unique. 
\end{proof}
%
For given Definitions~\ref{def:p-sol} and \ref{def:pi-sol} assuming Lemma~\ref{lemma:pi_separability}, note that solvability of $\GFp$ system~\eqref{eq:GF-p} implies solvability of $\GFpi$ system~\eqref{eq:GF-pi}. However, the converse is false, because  for some $\pi_i \in \mR$, there may be no $p_i \in \mR$ such that $\pi(p_i) =\pi_i$. 
}
%
%

The questions we would like to answer are the following:
\begin{enumerate}[label=(Q\arabic*)]
\item Do the systems~\eqref{eq:GF-p} or \eqref{eq:GF-pi} for an ideal gas have a solution?  
\item For a non-ideal gas (with the CNGA equation of state), does the system~\eqref{eq:GF-p} have a solution?
\end{enumerate}
For now, we can only prescribe a domain within which solutions for the system~\eqref{eq:GF-pi} (if they exist) must lie. This we do by estimating upper bounds using the given data as follows:
\begin{proposition}
 \label{prop:domain}
 Given a network $G(V, E)$, with sets $P, C$, $N_{ns}, N_s$ and data  as in system~\eqref{eq:GF-pi}, we define the following quantities:
 \begin{gather*}
 \beta_{m} \triangleq \max_{(i, j) \in P} \beta^*_{ij}, \; \alpha_{m} \triangleq \max_{(i, j) \in C} \alpha^*_{ij},  \; \phi_{m} \triangleq \max_{i \in N_{ns}} |q^*_i| \cdot |N_{ns}|, \\
 \pi_{m} \triangleq \left( \max_{i \in N_{s}} |\pi^*_i| +  |P|\beta_{m} \phi_{m}^2 \right) \cdot (\alpha_{m})^{|C|}.
 \end{gather*}
 Then the hypercube 
 \begin{equation}\label{eq:hypercube}
     \mathcal{D}_{\mathcal{H}} =\{\boldsymbol{x} \in \mR^{|V| + |E|}: |x_i| < \max(\phi_m, \pi_m)  \}
 \end{equation}
 is an open, bounded set that contains all solutions (if they exist) of system~\eqref{eq:GF-pi}.
\end{proposition}
These bounds have been derived by considering the maximum possible flow into any single pipe, then calculating the maximum possible pressure by considering  a node connected to the slack node with maximum slack pressure, and furthermore, taking the maximal possible compression achievable in the full network.

In Section~\ref{sec:results}, we shall find that the answer to (Q1) is that the system for an ideal gas has a unique generalized potential solution and if  the solution admits non-negative potentials, then it has a generalized pressure solution as well. 
\section{Results and Implications} 
\label{sec:results}
The stated facts in Section~\ref{sec:background} will now be put to use.
Before that however, we record the following result (restated with slight modification from \cite{nrsolver}) that guarantees the invertibility of a linear system with particular structure and is essential for the subsequent analysis.
\begin{proposition}
\label{prop:lin-system}
The  only solution to the following homogeneous linear system in $(\{x_k\}, \{y_{ij}\})$, where $k \in V$, $(i,j) \in E$ for an admissible gas network is $x_k = y_{ij} = 0$, where the real numbers  $a^*_i \neq 0$, all $a^*_i$ have the same sign, and for every cycle $\mathfrak{C} \in G(V, E)$, there exists at least one $(i,j) \in \mathfrak{C}$ such that $b^*_{ij} \neq 0$:
\begin{subequations}
\begin{align}
& a^*_i x_i - a^*_j x_j - b^*_{ij} y_{ij} = 0 \quad \forall (i, j) \in E, \\ 
& \underset{(j, i) \in E }{\sum} y_{ji} - \underset{(i, j) \in E }{\sum} y_{ij} = 0 \; \forall i \in N_{ns}, \\ 
& x_i = 0 \quad  \forall i \; \in N_s.
\end{align}
\label{eq:Jacobian_injective}
\end{subequations}
\end{proposition}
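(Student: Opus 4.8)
The plan is to exploit the graph structure to show the homogeneous system forces everything to vanish, handling the tree part and the cycle part separately. First I would observe that the edge equations $a^*_i x_i - a^*_j x_j = b^*_{ij} y_{ij}$ can be read as "the potential difference $a^*_i x_i - a^*_j x_j$ across edge $(i,j)$ equals a weighted flow $b^*_{ij} y_{ij}$." I would introduce the substitution $z_k \triangleq a^*_k x_k$, so the first set of equations becomes $z_i - z_j = b^*_{ij} y_{ij}$, and the slack conditions become $z_i = 0$ for $i \in N_s$ (using $a^*_i \neq 0$). Recovering $x_k$ from $z_k$ is then immediate since every $a^*_i \neq 0$, so it suffices to prove $z_k = 0$ for all $k$ and $y_{ij} = 0$ for all $(i,j)$.

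Next I would sum the balance equations over a suitable subset of non-slack nodes to extract an energy-type identity. Concretely, multiply each edge equation $z_i - z_j = b^*_{ij} y_{ij}$ by $y_{ij}$ and sum over all edges: the left side telescopes, via the balance equations \eqref{eq:Jacobian_injective} and the slack conditions $z_i = 0$ on $N_s$, into $\sum_{i} z_i \left( \sum_{(j,i)\in E} y_{ji} - \sum_{(i,j)\in E} y_{ij}\right) = \sum_{i \in N_{ns}} z_i \cdot 0 = 0$, while the right side is $\sum_{(i,j)\in E} b^*_{ij}\, y_{ij}^2$. Hence $b^*_{ij} y_{ij} = 0$ for every edge, which by the edge equations gives $z_i = z_j$ whenever $(i,j) \in E$. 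In other words, $z$ is constant on each connected component of $G$.

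Now I would close the argument componentwise. If a component contains a slack node, then $z \equiv 0$ on it by the slack condition, and then every $b^*_{ij} y_{ij} = 0$ together with the fact that on any cycle some $b^*_{ij} \neq 0$ — plus the balance equations restricted to that component — should force all $y_{ij} = 0$; the remaining edges form a forest (after deleting cycle edges where $b^*_{ij}\neq 0$ kills $y_{ij}$), and on a forest the balance equations with zero right-hand side propagate from the leaves inward to give $y_{ij}=0$ everywhere. If a component contains \emph{no} slack node, I must still show $z$ is forced to $0$ there; this is where assumptions \ref{assumption:slacks}--\ref{assumption:cycle-pipe} and the admissibility of the network enter, since an all-compressor component with no slack node is excluded, and more generally one argues that every component of an admissible network meets $N_s$ or else the balance equations and the $b^*$-nonvanishing-on-cycles hypothesis still force vanishing — here the value of $z$ on such a component is unconstrained by the slack equation but the flows must still satisfy a consistent balance, and a constant $z$ with all $b^*_{ij}y_{ij}=0$ leaves the flows to be determined purely by the forest balance equations, giving $y=0$, after which $z$ itself being constant and appearing nowhere else is pinned down only if the component touches a slack node.

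The main obstacle, as the last paragraph signals, is the case of a connected component disjoint from $N_s$: the substitution-and-telescoping argument shows $z$ is constant and $y$ vanishes on such a component, but it does not by itself show that constant is zero. I expect the resolution to rely on the admissibility assumptions \ref{assumption:slacks}--\ref{assumption:cycle-pipe} (in particular, that the network is connected, or that each component contains a slack node) as encoded in the phrase "admissible gas network," so that this pathological case does not arise; alternatively, one shows the original variable $x_k = z_k/a^*_k$ is only meaningful when tied to a slack node. I would therefore state at the outset which admissibility conditions guarantee that every component meets $N_s$, and then the componentwise argument above is complete. The telescoping identity $\sum b^*_{ij} y_{ij}^2 = 0$ is the crux and the rest is bookkeeping on the forest.
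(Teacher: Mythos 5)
Your substitution $z_k=a^*_kx_k$ and the telescoping identity are sound, but the crux step has a genuine gap: from $\sum_{(i,j)\in E} b^*_{ij}\,y_{ij}^2=0$ you conclude $b^*_{ij}y_{ij}=0$ edge by edge, which requires all $b^*_{ij}$ to have one sign --- a hypothesis the statement never gives you (it only asserts one nonzero $b^*_{ij}$ per cycle). Without it the step, and indeed the proposition read literally, fails: take a slack node $1$ and a non-slack node $2$ joined by two parallel edges with $b^*=+1$ and $b^*=-1$; then $x_2=t$, $y_1=-t$, $y_2=t$ is a nonzero kernel element even though $\sum b^*y^2=0$ and every cycle has a nonzero $b^*$. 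The argument becomes correct only once you state and use the sign condition that holds in the intended application, namely $b^*_{ij}=(1+s)\beta^*_{ij}|\phi_{ij}|^s\geqslant 0$ on pipes and $b^*_{ij}=0$ on compressors. (It is telling that after your substitution the ``same sign'' hypothesis on the $a^*_i$ is never used; the coefficients in the statement are a compressed stand-in for the Jacobian structure, and the missing sign information on $b^*$ has to be supplied explicitly.)

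There is a second gap in the flow part. After $b^*_{ij}y_{ij}=0$, nonzero flows can live only on the subgraph $G_0$ of edges with $b^*_{ij}=0$, which the cycle hypothesis makes a forest --- so far so good --- but balance is imposed only at \emph{non-slack} nodes, so your ``propagate from the leaves inward'' argument stalls at slack leaves. Concretely, a path of $b^*=0$ edges joining two slack nodes (slack nodes $1,3$, non-slack node $2$, edges $(1,2)$ and $(2,3)$ with $b^*=0$) carries an arbitrary constant flow: divergence is zero at the interior node and nothing constrains the endpoints, yet every cycle condition is vacuously satisfied. To exclude this you need assumption \ref{assumption:path-pipe} together with the fact that pipes carry $b^*>0$, i.e., that no two slack nodes lie in the same component of $G_0$; your proof never invokes this, relying only on the cycle hypothesis. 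Finally, the component-without-a-slack-node issue you flag is real and is settled by reading connectivity (every component meets $N_s$) into ``admissible,'' but as written your proof leaves all three points unpatched, so it is incomplete. For reference, the letter itself does not prove Proposition~\ref{prop:lin-system}; it is imported from \cite{nrsolver}, so the comparison here is against the hypotheses actually needed rather than an in-paper argument.
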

In order to invoke Proposition~\ref{thm:homotopy}, we interpret the solution of system ~\eqref{eq:GF-pi} as a zero or root of a \review{nonlinear} operator on $\mR^{|E| +|V|}$ and modify system~\eqref{eq:GF-pi} to introduce a scalar parameter $s \in [0, 1]$  such that 
the operator now \eqref{eq:GF-pi-s} has the form $$F: \mR^{|E|+|V|} \times \mR \longrightarrow \mR^{|E| + |V|}.$$ 
Setting $s=1$ in system~\eqref{eq:GF-pi-s}  recovers system~\eqref{eq:GF-pi}.
\begin{subnumcases} {\label{eq:GF-pi-s}
F((\bs{\phi}, \bs{\pi}), s):}  
\pi_i - \pi_j = \beta^{*}_{ij} \phi_{ij} | \phi_{ij}|^s \quad \forall (i, j) \in P , &  \label{eq:GF-pi-s-pipe}\\ 
\gamma^{*}_{ij} \pi_i - \pi_j = 0 \quad \forall (i, j) \in C, & \label{eq:GF-pi-s-compressor} \\ 
\underset{(j, i) \in E }{\sum} \phi_{ji} - \underset{(i, j) \in E }{\sum} \phi_{ij} = q^{*}_i \; \forall i \in N_{ns}, & \label{eq:GF-pi-s-balance}\\
\pi_i = \pi^{*}_i \; \forall i \in N_s. & \label{eq:GF-pi-s-slack}
\end{subnumcases}
\begin{remark}
$F((\bs{\phi}, \bs{\pi}), s)$ in \eqref{eq:GF-pi-s} is once-differentiable with a continuous derivative. 
\end{remark}

\begin{theorem}
\label{thm:existence-gfpi}
A  solution exists for $\GFpi$ system~\eqref{eq:GF-pi} and it is unique.
\end{theorem}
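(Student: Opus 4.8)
The plan is to get uniqueness for free from Lemma~\ref{lemma:Pi-unique}, so that the whole task is \emph{existence} of a generalized potential-solution, which I would establish by a degree-theoretic homotopy argument built on the deformation $F((\bs{\phi},\bs{\pi}),s)$ of system~\eqref{eq:GF-pi-s}. Concretely, let $H:\mathrm{cl}(\mathcal{D})\times[0,1]\to\mR^{|E|+|V|}$ be the residual map whose components are the left-hand sides of \eqref{eq:GF-pi-s-pipe}--\eqref{eq:GF-pi-s-slack} minus their right-hand sides. Then a zero of $H(\cdot,1)$ is exactly a solution of $\GFpi$, while $H(\cdot,0)$ is an \emph{affine} map, since $|\phi_{ij}|^0\equiv1$ turns the pipe relations into the linear equations $\pi_i-\pi_j-\beta^{*}_{ij}\phi_{ij}=0$. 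The map $H$ is jointly continuous on $\mathrm{cl}(\mathcal{D})\times[0,1]$ (the only delicate point, $\phi_{ij}\mapsto\phi_{ij}|\phi_{ij}|^{s}$ near $\phi_{ij}=0$, is handled by $|\phi_{ij}|^{1+s}\le\max(|\phi_{ij}|,|\phi_{ij}|^{2})$), so Proposition~\ref{thm:homotopy} is applicable once a suitable domain is fixed and the boundary condition checked.

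Next I would fix the domain. Proposition~\ref{prop:domain} already gives an open bounded hypercube containing every solution of $\GFpi$, i.e.\ the case $s=1$; I would rerun that estimate for general $s\in[0,1]$ — the flow bound $\phi_{m}$ is unchanged, as it comes only from the balance equations~\eqref{eq:GF-pi-s-balance}, which do not involve $s$, and the potential bound only needs $\phi_{m}^{2}$ replaced by $\max(\phi_{m},\phi_{m}^{2})$ to dominate $|\phi_{ij}|^{1+s}$ on pipes. This yields a single open bounded hypercube $\mathcal{D}$ whose interior contains every zero of $H(\cdot,s)$ for \emph{all} $s\in[0,1]$; in particular $H(\bs{x},s)\neq\bs{0}$ on $\partial\mathcal{D}\times[0,1]$, which is precisely the hypothesis of Proposition~\ref{thm:homotopy}.

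Then I would compute the degree at the easy endpoint $s=0$. Here $H(\cdot,0)$ has the form $\bs{x}\mapsto A\bs{x}-\bs{c}$, and Proposition~\ref{prop:lin-system} — with the potential variables playing the role of the $x_{k}$, the $a^{*}_{i}$ taken positive, and $b^{*}_{ij}=\beta^{*}_{ij}\neq0$ on pipes, so that every cycle (which contains a pipe by~\ref{assumption:cycle-pipe}) carries a nonzero $b^{*}$ — shows the associated homogeneous system has only the trivial solution, hence $A$ is invertible. Its unique zero $\bs{x}^{*}=A^{-1}\bs{c}$ is a solution of the $s=0$ system and therefore lies in the interior of $\mathcal{D}$, so by Remark~\ref{remark:degree-lin-op} (equivalently Definition~\ref{def:degree}) $\mathrm{deg}(H(\cdot,0),\mathcal{D},\bs{0})=\mathrm{sgn}(\det A)\neq0$. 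Homotopy invariance then gives $\mathrm{deg}(H(\cdot,1),\mathcal{D},\bs{0})\neq0$, and Proposition~\ref{thm:kronecker} produces a zero of $H(\cdot,1)$ in $\mathcal{D}$, i.e.\ a generalized potential-solution; combined with Lemma~\ref{lemma:Pi-unique} this proves the theorem.

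The step I expect to be the main obstacle is the uniform-in-$s$ a priori bound: one must be certain the estimate of Proposition~\ref{prop:domain} survives the \emph{entire} homotopy, so that no zero of $H(\cdot,s)$ escapes to $\partial\mathcal{D}$ for any intermediate $s$ — if the bound degraded for some $s\in(0,1)$ the homotopy argument would collapse. A secondary point requiring care is verifying that the hypotheses of Proposition~\ref{prop:lin-system} are genuinely met by the $s=0$ linearization, in particular that the common-sign condition on the $a^{*}_{i}$ is compatible with the compressor relations $\gamma^{*}_{ij}\pi_i-\pi_j=0$.
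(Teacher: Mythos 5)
Your proposal is correct and follows essentially the same route as the paper's own proof: the homotopy $F((\bs{\phi},\bs{\pi}),s)$ of system~\eqref{eq:GF-pi-s}, the linear $s=0$ endpoint handled via Proposition~\ref{prop:lin-system}, the hypercube of Proposition~\ref{prop:domain} as the domain, homotopy invariance plus Proposition~\ref{thm:kronecker} for existence, and Lemma~\ref{lemma:Pi-unique} for uniqueness. Your only deviation is a welcome one: you make explicit the uniform-in-$s$ boundary estimate (replacing $\phi_m^2$ by $\max(\phi_m,\phi_m^2)$), a point the paper passes over with the phrase ``by construction no solution can lie on the boundary.''
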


\begin{proof}
Observe that $F((\bs{\phi}, \bs{\pi}), 0)$ is a linear system of the same form as Proposition~\ref{prop:lin-system} and hence has a unique solution in the open hypercube $\mathcal{D}_{\mathcal{H}}$ in~\eqref{eq:hypercube} constructed in Proposition~\ref{prop:domain}. Moreover,  $F((\bs{\phi}, \bs{\pi}), s)$ is continuous on $\mathrm{cl}(\mathcal{D}_{\mathcal{H}})$ and by construction no solution can lie on the boundary of the hypercube. Hence by Proposition~\ref{thm:homotopy}, $F((\bs{\phi}, \bs{\pi}), s)$ has a solution in $\mathcal{D}_{\mathcal{H}}$ for every $s \in [0, 1]$. For $s=1$, the  system~\eqref{eq:GF-pi} then has a generalized-potential solution. Uniqueness follows from Lemma~\ref{lemma:Pi-unique}.
\end{proof}
The following corollary follows easily from Theorem~\ref{thm:existence-gfpi}:
\begin{corollary}
\label{corollary:invert-pi}
If the (unique) generalized-potential solution to \eqref{eq:GF-pi} has values of the potential $\pi_i \in \mR \; \forall i \in V$ such that they can be inverted to find pressures $p_i\in \mR$ satisfying $\pi(p_i) = \pi_i$, then a generalized-pressure solution exists for system~\eqref{eq:GF-p}. 
\end{corollary}
We note in passing that while inversion is possible only for non-negative values of $\pi^{\mathrm{ideal}}$ \eqref{eq:pi-ideal}, for $\pi^{\mathrm{cnga}}$ \eqref{eq:pi-cnga} a preimage can be found corresponding to any real value of the potential as the expression is a cubic polynomial.
\subsection{Ideal Gas Equation of State}
Theorem~\ref{thm:existence-gfpi} and Corollary~\ref{corollary:invert-pi}  have far-reaching implications, laying to rest all ambiguity about the solvability of the steady-state gas flow equations on networks for the case of the \emph{ideal gas}. 
The steady-state system for an ideal gas (corresponding to system~\eqref{eq:GF-pi})  \emph{always} has a unique, generalized potential solution. Furthermore, a generalized-pressure solution can be derived contingent on non-negative potentials.
In light of this statement, one may qualify the question (Q1) asked earlier to now read:
\begin{enumerate}[label=(Q\arabic*), start=3]
\item Do the systems~\eqref{eq:GF-p} or \eqref{eq:GF-pi} for an ideal gas have a \emph{feasible} solution?  
\end{enumerate}
From the theory developed thus far, one may definitively conclude apropos of  (Q3) that  the system~\eqref{eq:GF-pi} for the ideal gas is infeasible if any component of the generalized potential solution or compressor flow is negative, but feasible otherwise.

\subsection{Non-ideal Gas with CNGA Equation of State}
 \begin{figure*}[!htb]
\centering
\subfloat[$g(\alpha) = \gamma_{\mathrm{opt}}$]{\includegraphics[scale=0.65]{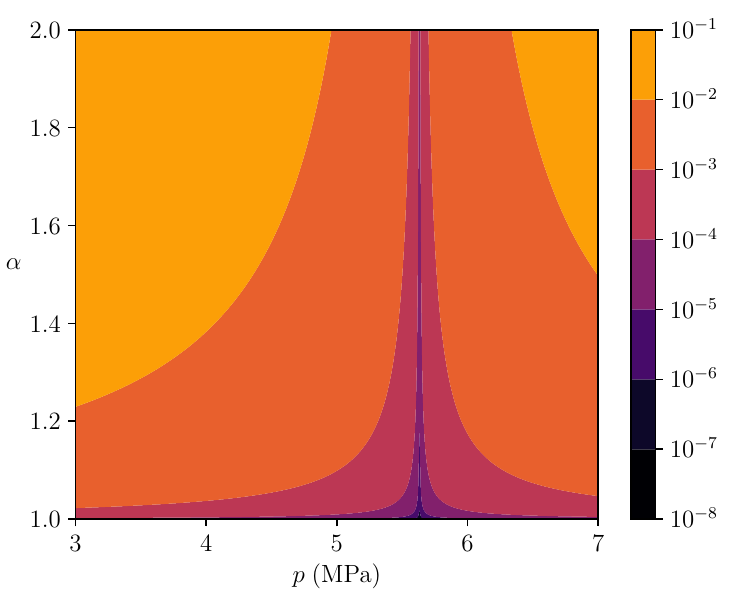}}\hfill
\subfloat[$g(\alpha) = \alpha^2$]{\includegraphics[scale=0.65]{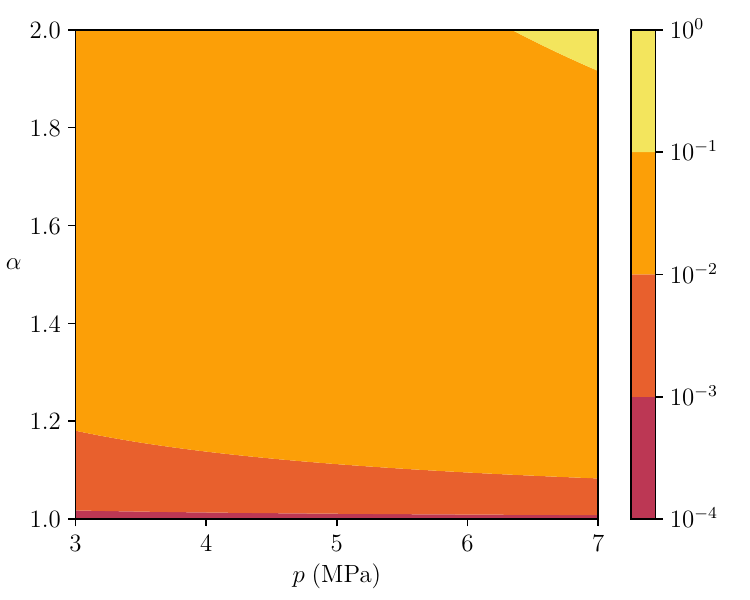}}
\caption{The plots above show the magnitude and distribution of the relative error $\mathrm{e}_{\mathrm{rel}} (\alpha, p)$ in \eqref{eq:rel_error} for two different choices of $g(\alpha)$ for the CNGA equation of state. The colour bars demonstrate how the error is reduced drastically by choosing the optimal $g(\alpha)$ in \eqref{eq:gamma-opt}. With this choice, we may write $\pi(\alpha p) \approx \gamma_{\mathrm{opt}}\pi(p)$ so that the system~\eqref{eq:GF-p} for the CNGA equation of state is approximated by \eqref{eq:GF-pi}.}
\label{fig:comp-approx}
\end{figure*}
We have thus far made no attempt to answer (Q2) articulated earlier because we already know generalized-pressure solutions may not exist \cite{nrsolver}. 
However, it is still of interest in the context of $\GFp$ system~\eqref{eq:GF-p}, to construct a continuous homotopy function $\chi(p, s)$  that satisfies $\chi(p, 0) = \pi^{\mathrm{ideal}}(p)$ and $\chi(p, 1) = \pi^{\mathrm{cnga}}(p)$ to replace $\pi(p)$ in system~\eqref{eq:GF-p} and examine why  Proposition~\ref{thm:homotopy} fails to apply. The failure is owing to  the term  $\chi(p, s)$, where it is not possible to estimate an upper bound for the solution analogous to Proposition~\ref{prop:domain}. 
Thus we are no closer to any kind of answer regarding existence of solution for the CNGA equation of state in (Q2).
However, it is natural to consider if the theory developed could prove useful in some other way.

Even though \eqref{eq:GF-p-compressor} \emph{does not imply} \eqref{eq:GF-pi-compressor}  for $\pi^{\mathrm{cnga}}$ \eqref{eq:pi-cnga}, can we \emph{approximate} $\pi(\alpha p) \approx g(\alpha) \pi(p)$ for some function $g$, so that $\GFpi$ \eqref{eq:GF-pi} approximates $\GFp$ \eqref{eq:GF-p} for the CNGA equation of state ? This approximation will be sensible only if the solution to    $\GFpi$ \eqref{eq:GF-pi} (which always exists by Theorem~\ref{thm:existence-gfpi}) is an acceptable approximant to the solution of  $\GFp$ \eqref{eq:GF-p} (when the latter exists). 
To investigate if that is so, we look to determine
\begin{equation*}
g(\alpha) \in \mathrm{span}\{1, \alpha,\alpha^2, \alpha^3\}
\end{equation*}
for the CNGA equation of state such that the residual $\pi(\alpha p) - g(\alpha)\pi(p)$ is minimized in a least-squares sense implied by the quantity 
\begin{equation*}
\int_{\alpha_{\mathrm{min}}}^{\alpha_{\mathrm{max}}}\int_{p_{\mathrm{min}}}^{p_{\mathrm{max}}} \left( \pi(\alpha p) - g(\alpha)\pi(p) \right)^2 \mathrm{d}p \; \mathrm{d}\alpha.
\end{equation*}
For an operating range of \numrange{1.0}{2.0} for the compressor ratio $\alpha$, with pressures within \SIrange{3}{7}{\mega\pascal}, we obtain 
\begin{equation}
\gamma_{\mathrm{opt}} = g(\alpha) \approx 0.9\alpha^2 +0.1\alpha^3.
\label{eq:gamma-opt}
\end{equation}
The significance of this choice can be evaluated by comparing the magnitude and the distribution of the relative error 
\begin{equation}
    \mathrm{e}_{\mathrm{rel}}(\alpha, p) = \left|\dfrac{\pi(\alpha p) - g(\alpha)\pi(p)}{\pi(\alpha p)} \right|
    \label{eq:rel_error}
\end{equation}
for $g(\alpha) = \gamma_{\mathrm{opt}}$ with that of the choice $g(\alpha) = \alpha^2$  in Figure~\ref{fig:comp-approx}. 
It can be seen that the error is reduced drastically by choosing the optimal $g(\alpha)$ in \eqref{eq:gamma-opt}. With this choice, we may write $\pi(\alpha p) \approx \gamma_{\mathrm{opt}}\pi(p)$ so that the system $\GFp$~\eqref{eq:GF-p} for the CNGA equation of state is approximated by $\GFpi$~\eqref{eq:GF-pi}.

Let $\bm x_a$ be the pressure-flux pair obtained from inversion of the potentials in the solution to $\GFpi$ system~\eqref{eq:GF-pi} with  the choice $\gamma_{\mathrm{opt}}$ in \eqref{eq:GF-pi-compressor}. Note that $\bm x_a$ can always be found since a unique generalized potential solution to $\GFpi$ system~\eqref{eq:GF-pi} always exists, and a pressure corresponding to a potential can always be found for the cubic polynomial \eqref{eq:pi-cnga}. 
We now compute a solution $\bm x$ (assuming it exists) to the original $\GFp$ system~\eqref{eq:GF-p}. If we denote by $\bm R(\bm y)$ the vector of residuals of the $\GFp$ system~\eqref{eq:GF-p} evaluated at $\bm y$, then $||\bm R(\bm x)||_{\infty} \approx 0$.

Motivated by standard ideas in the inversion of linear systems \cite{hornjohnson}, an approximate solution should have a small residual and also be close to the true solution. Thus, the two quantities that measure the acceptability of $\bm x_a$ as an approximation of $\bm x$ are the relative error and the residual expressed by
\begin{flalign}
\dfrac{||\bm x_a -\bm x||_{\infty}}{||\bm x||_{\infty}} \text{ and } ||\bm R(\bm x_a)||_{\infty}.
\label{eq:approx-measure}
\end{flalign}
The only qualitative assertion we can make is that the solution $\bm x_a$ ought to satisfy \eqref{eq:GF-p-pipe}, \eqref{eq:GF-p-slack} and that the  quantity $||\bm R(\bm x_a)||_{\infty}$ will be determined by  \eqref{eq:GF-p-compressor} alone. We also expect that a good approximate solution will serve as a good initial guess in a Newton-Raphson algorithm and lead to convergence within a handful of iterations.          

In order to support our claim, we perform numerical experiments on the GasLib-40 pipeline network \cite{Schmidt2017}.
The values for the pipe diameter, friction factor, nodal injections, etc., can be obtained at \url{https://gaslib.zib.de/} and \url{https://github.com/kaarthiksundar/ExistenceGasFlowRuns}. 
The junction with the largest injection in the network is chosen to be the slack junction with slack pressure set to \SI{5}{\mega\pascal}, and compressor ratios are set to a random value in the range \numrange{1.0}{2.0} for each compressor. We solve 1000 instances of the problem to obtain the approximate solution $\bm x_a$ and the true solution $\bm x$.
The resultant cumulative distribution obtained for the residual and the relative error \eqref{eq:approx-measure}  is shown in Figure~\ref{fig:cdf}.

These numerical experiments support our assertion that the solution $\bm x_a$ of system~\eqref{eq:GF-pi} is a good approximation to the computed solution $\bm x$ of system~\eqref{eq:GF-p} whenever the latter exists. Moreover, starting with the initial guess $\bm x_a$, the convergence to $\bm x$ was attained in exactly 2 Newton iterations in each instance.
The comparison between the solutions for an ideal gas and a non-ideal gas has already been analyzed in \cite{nrsolver} and is hence not considered here.
\begin{figure}
    \centering
    \includegraphics[scale=0.7]{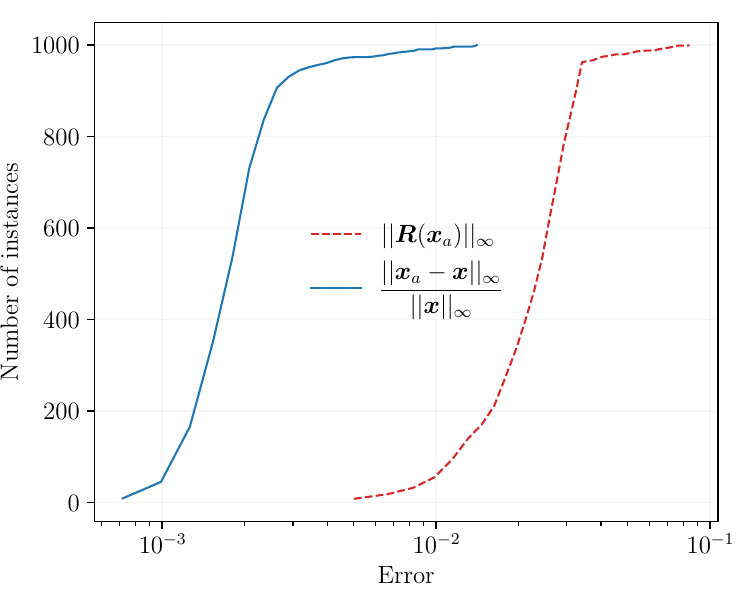}
    \caption{Cumulative distribution function for GasLib-40 showing the residual  $O(10^{-2})$ (red) as well as the relative error $O(10^{-3})$ (blue) defined by \eqref{eq:approx-measure} for the non-ideal gas. }
    \label{fig:cdf}
\end{figure}

%

\subsection{Generalizations}
The results obtained so far can be generalized to flow networks with different fluids. The three points of difference can come from the equation of state describing the fluid,  the model of the pumping device, and the viscous resistance function for the fluid (in this letter, the resistance function was $\phi|\phi|$).

In general, for any equation of state that has the form of a scaled monomial, the existence result in Theorem~\ref{thm:existence-gfpi} remains unchanged.
This is so in water distribution networks or in carbon capture and sequestration where the incompressibility of water and carbon dioxide implies that the equation of state is $\rho(p) = \rho^*$ (a constant) \cite{singh2019optimal}. 
The difference in the model of pumping devices could manifest in a given additive increase in potential or pressure \cite{tasseff2022polyhedral} in place of a given multiplicative increase considered in \eqref{eq:GF-p-compressor}  or \eqref{eq:GF-pi-compressor}, but Theorem~\ref{thm:existence-gfpi} still holds.
\review{Finally, Theorem~\ref{thm:existence-gfpi} continues to hold as long as the viscous resistance function chosen is monotonic and differentiable.}

\subsection{Use of Existence Result in Optimization}
\review{We shall now illustrate how the existence of a unique solution to $\GFpi$ can be used to detect infeasibilities in
the so-called optimal gas flow problem (OGF) that is solved regularly by natural gas pipeline network operators.} 

\review{The steady-state OGF \cite{Singh2019} is a nonlinear optimization problem described as follows:  Given  (vectors of) non-slack base nodal injections $\tilde{\bm q}$, slack nodal pressures $\tilde{\bm p}$, the minimum pressure value $p^{\min}$ at any non-slack node, and bounds ($\alpha^{\min}$ and $\alpha^{\max}$) for the compressor ratios, determine the vector of compressor ratios $\bm \alpha$ to \emph{minimize} the total compressor power \cite{menon2005gas} while satisfying the steady-state gas flow physics $\GFp$ Eq. \eqref{eq:GF-p} with $\bm q^* = \tilde{w}\cdot \tilde{\bm q}$, $\bm p^* =\tilde{\bm p}$, and $\bm \alpha^* =\bm \alpha$. Here a given injection scaling factor $\tilde{w} \geqslant 1$ is used to parametrize the OGF. }

\noindent \review{Mathematically, the OGF is formulated as: 
\begin{subnumcases} {\label{eq:ogf}
\mathrm{OGF}(\tilde w):}  
\min \sum_{(i, j) \in C} \phi_{ij}(\alpha_{ij}^m-1) \text{ subject to: } \label{eq:ogf-obj}\\ 
~~\mathcal G \mathcal F_p \text{ with ($\bm \alpha$, $\tilde{\bm p}$, $\tilde{w} \cdot \tilde{\bm q}$), } \label{eq:ogf-physics} \\ 
~~\alpha^{\min} \leqslant \alpha_{ij} \leqslant \alpha^{\max} ~\forall (i, j) \in C, \label{eq:ogf-alpha-limits}\\
~~\tilde{p}_i \geqslant p^{\min} ~\forall i \in N_{ns}. \label{eq:ogf-p-limits}
\end{subnumcases}
%
Eventually, $\tilde{\bm q}$ scaled by a large enough $\tilde{w}$ will cause withdrawals at some nodes large enough to lower nodal pressures below $p^{\min}$ even if compression ratios are at their maximum.}
\noindent \review{Thus, if we adopt  the following procedure -- (i) Set $\alpha_{ij}$ = $\alpha^{\max}$,  (ii) Solve $\GFpi$ (By Theorem~\ref{thm:existence-gfpi}, unique solution exists), we can guarantee that the $\mathrm{OGF}(\tilde w)$ is infeasible
if for any $i \in N_{ns}$, either $\pi_i < 0$ or  $p_i < p^{\min}$. 
For GasLib-40, this procedure shows that  $\mathrm{OGF}(\tilde w)$ is \emph{infeasible} for any $\tilde w \geqslant 2.135$ given $\alpha^{\min}=1$, $\alpha^{\max}=1.5$, and $p^{\min} =  300~ \si{psi}$. }

\review{There are no general techniques to provide infeasibility guarantees for nonlinear optimization problems and the infeasibility guarantees provided in this section was made possible solely due to the existence result for $\GFpi$ derived in Theorem~\ref{thm:existence-gfpi}. }

\section{Conclusion}
\label{sec:conclusion}

The arguments and proofs presented in this letter lay to rest the question of existence of solutions to potential-driven, steady-state fluid flow in a network when the potential functions are scaled monomials (Lemma~\ref{lemma:pi_separability}) such as for the ideal gas.
However, no conclusion can be drawn regarding  existence of solutions for potential functions that do not obey Lemma~\ref{lemma:pi_separability}. However, for gases following the CNGA equation of state, while the question of existence remains open, we construct an alternative system that always has a unique solution and show that the solution to this system is a good approximant of the true solution.
The results are also applicable to other networks carrying fluids such as water \cite{singh2019optimal} or carbon dioxide. Importantly, it gives support to computational work by enabling correct diagnosis of algorithmic failure, problem stiffness and non-convergence. Future work could look towards determining solutions by homotopic continuation methods \cite{allgower2003introduction} starting with a less complicated system with a guaranteed solution as exemplified by \cite{Chiang2017Feb}.

\printbibliography

\end{document}